\def\theequation{\arabic{equation}}
  \def\theequation{\thesection.\arabic{equation}}
  \def\theequation{\arabic{equation}}
\newsavebox{\savepar}
\newtheorem{corollary}{Corollary}[section]
\newtheorem{remark}{Remark}[section]
\newtheorem{remarks}{Remarks}[section]
\newtheorem{theorem}{Theorem}[section]
\def\appendix{\par
   \setcounter{section}{0}
   \setcounter{equation}{0}
   \def\@chapapp{APPENDIX}
   \def\thesection{\Alph{section}}
   \def\theequation{A.\arabic{equation}}
   \def\section{
   \setcounter{equation}{0}
   \refstepcounter{section}
   \@startsection {section}{A}{0pt}{-3.5ex \@plus -1ex \@minus -.2ex}
                  {0.3ex \@plus.2ex}{\normalsize\sffamily\bfseries} }}
\def\eop{\hbox{{\vrule height7pt width3pt depth0pt}}}
\newcommand{\least}{\let\CS=\@currsize\renewcommand{\baselinestretch}{1}\tiny\CS}
\newcommand{\singlespacing}{\renewcommand{\baselinestretch}{1.5}}
\newcommand{\oneandahalfspacing}{\let\CS=\@currsize\renewcommand{\baselinestretch}{1.2}\tiny\CS}
\newcommand{\doublespacing}{\let\CS=\@currsize\renewcommand{\baselinestretch}{2.5}\tiny\CS}
  \renewcommand{\baselinestretch}{1.3}
\newenvironment{proof}
{\begin{sloppypar}\noindent{\sf Proof.~}}
{\hspace*{\fill}\eop
\medskip
\end{sloppypar}}
\numberwithin{equation}{section}
\newcommand{\be}{\begin{equation}}
\newcommand{\ee}{\end{equation}}
\newcommand{\beano}{\begin{eqnarray*}}
\newcommand{\eeano}{\end{eqnarray*}}
\newcommand{\bea}{\begin{eqnarray}}
\newcommand{\eea}{\end{eqnarray}}
\newcommand{\ba}{\begin{array}}
\newcommand{\ea}{\end{array}}
\newcommand{\vone}{\vskip 2ex}
\newcommand{\vtwo}{\vskip 4ex}
\newcommand{\T} {{\mathbb{T}}}
\begin{document}
\singlespacing

\begin{center}
{\Large \bf Beta-Function Identities via Probabilistic Approach}\\
\end{center}

\vtwo
\begin{center}
{\large \bf  P. Vellaisamy\textsuperscript{1} and A. Zeleke \textsuperscript{2}} \\
 \textsuperscript{1}Department of Mathematics, Indian Institute of Technology Bombay \\
\noindent Powai, Mumbai-400076, India.\\
{Email: \it pv@math.iitb.ac.in}

 \textsuperscript{2}Lyman Briggs College \& Department of Statistics \& Probability, \\
\noindent Michigan State University. East Lansing, MI 48825, USA\\
{Email: \it zeleke@stt.msu.edu}
\end{center}

\abstract {Using a probabilistic approach, we derive several interesting identities involving beta functions. Our results generalize certain well-known combinatorial identities involving binomial coefficients and gamma functions.}

\vone
{\noindent \bf Keywords}. {Binomial inversion, combinatorial identities, gamma random variables, moments,  probabilistic proofs.}

\vone
\noindent  MSC2010 {\it Subject Classification}: Primary: 62E15, 05A19; Secondary: 60C05.\\

\section{Introduction}
There are several interesting combinatorial identities involving binomial coefficients, gamma functions, hypergeometric functions  (see, for example, Riordan (1968), Petkovsek et al (1996), Bagdasaryan (2015), Srinivasan (2007) and Vellaisamy and Zeleke (2017)), {\it etc}. One of these is the famous identity that involves the convolution of the central binomial coefficients: \begin{equation} \label{eqn1.1}
 \sum_{k=1}^{n} \binom{2k}{k}\binom{2n-2k}{n-k}=4^n.
\end{equation}   
In recent years, researchers have provided several proofs of $(1.1)$. 
A proof that uses generating functions can be found in Stanley (1997). The combinatorial proofs can also be found, for example, in Sved (1984), De Angelis (2006) and Miki{\' c} (2016).  A computer generated proof using the WZ method is given by Petkovsek, Wilf and Zeilberger(1996). Chang and  Xu (2011) extended the identity in \eqref{eqn1.1}
and presented a probabilistic proof of the identity
\begin{equation}\label{eqn1.2}
 \sum_{\substack{k_j \geq 0,\; 1 \leq j \leq m;\\ \sum_{J=1}^m  k_j=n}} \binom{2k_1}{k_1}\binom{2k_2}{k_2}\cdots\binom{2k_m}{k_m}=\frac{4^n}{n!} \frac{\Gamma(n+\frac{m}{2})}{\Gamma(\frac{m}{2})},
\end{equation}
 for positive integers $m$ and $n$, and  Miki{\' c} (2016) presented a combinatorial proof of this identity based on the method of recurrence relations and telescoping.
 
 \noindent A related identity for the alternating convolution of central binomial coefficients is 
 \begin{equation}\label{eqn1.3}
 \sum_{k=0}^{n} (-1)^k \binom{2k}{k}  \binom{2n-2k}{n-k} =
 \begin{cases*}
 2^n \binom{n}{\frac{n}{2}}, & if  $n$ is even  \\
 \phantom{0 }0, & if  $n$ is odd.
 \end{cases*} 
 \end{equation}
 
 \noindent The combinatorial proofs of the above identity can be found in, for example, Nagy (2012), Spivey (2014) and  Miki{\' c} (2016).
 Recently, Pathak (2017) has given a probabilistic of the above identity.
 
 \vspace*{.3cm}
 \noindent Unfortunately, in the literature, there are only a few identities that involve  beta functions are available.  Our goal in this paper is to establish several interesting beta-function identities,
 similar to the ones given in \eqref{eqn1.1} and \eqref{eqn1.3}. Interestingly, our results generalize all the above-mentioned identities, including the main result  \eqref{eqn1.2} of Chang and  Xu (2011). Our approach is based on probabilistic arguments, using the moments of the sum or the difference of tow gamma random variables.   

\section{Identities Involving the Beta Functions}
The beta function, also known as the Euler first integral, is defined by 
\begin{equation} \label{eqn2.1}
B(x,y)=\int_0^1  t^{x-1}(1-t)^{y-1}dt,\;  x, y>0.
\end{equation}
 It was studied by Euler and Legendre and is related to the gamma functions by 
\begin{equation}\label{eqn2.2}
B(x.y)=\frac{\Gamma(x)\Gamma(y)}{\Gamma(x+y)},
\end{equation}
where $\Gamma(x)=\int_0^\infty t^{x-1}e^{-t} dt, x>0.$ The beta function is symmetric, {\it i.e.} $B(x,y)=B(y,x)$ and satisfies the basic identity 
\begin{equation} \label{eqn2.3}
 B(x,y)=B(x,y+1)+B(x+1,y),~ \text{for}~x, y>0. 
 \end{equation}

\noindent  Using a  probabilistic approach, we generalize, in some sense, the above basic identity in \eqref{eqn2.3} in two different directions.

\noindent Let $ X$ be a gamma  random variable with parameter $p >0$, denoted by $ X \sim G(p)$, and density $$f(x|p)=\frac{1}{\Gamma(p)}e^{- x}x^{p-1},\;x>0,\; p >0.$$  Then, it follows  (see Rohatgi and Saleh (2002), p.~212) that
\begin{eqnarray} \label{eqn2.4}
E(X^n)=\frac{1}{\Gamma(p)} \int_0^\infty e^{- x}x^{p+n-1}dx=\frac{\Gamma(p+n)}{\Gamma(p)}. 
\end{eqnarray}
Let $X_1 \sim G(p_1)$ and $X_2 \sim G(p_2)$ be two independent gamma distributed  random variables, with parameters $p_1$ and $p_2$, respectively. Then it is known that $Y = X_1+X_2$ follows a gamma distribution with parameters $(p_1+p_2)$, {\it i.e}, $Y \sim G(p_1+p_2)$. 
We compute the $n$-th moment $E(Y^n)$ in two different ways, and equating them gives us an identity involving  beta functions.
\begin{theorem} Let $p_1,\;p_2 > 0$. Then for any  integer $n \geq 0$, 
\begin{equation}\label{eqn2.5}
\displaystyle \sum_{k=0}^n \binom{n}{k}B(p_1+k,p_2+n-k)=B(p_1,p_2).
\end{equation}
\end{theorem}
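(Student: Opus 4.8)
The plan is to compute $E(Y^n)$ for $Y = X_1 + X_2$ in two ways, exactly as the paragraph preceding the theorem suggests. On the one hand, since $Y \sim G(p_1+p_2)$, formula \eqref{eqn2.4} gives $E(Y^n) = \Gamma(p_1+p_2+n)/\Gamma(p_1+p_2)$. On the other hand, I would expand $Y^n = (X_1+X_2)^n$ by the binomial theorem and use independence to get $E(Y^n) = \sum_{k=0}^n \binom{n}{k} E(X_1^k)\,E(X_2^{n-k})$; applying \eqref{eqn2.4} to each factor turns this into $\sum_{k=0}^n \binom{n}{k} \frac{\Gamma(p_1+k)}{\Gamma(p_1)}\cdot\frac{\Gamma(p_2+n-k)}{\Gamma(p_2)}$.

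Equating the two expressions yields
\[
\frac{\Gamma(p_1+p_2+n)}{\Gamma(p_1+p_2)} = \sum_{k=0}^n \binom{n}{k} \frac{\Gamma(p_1+k)\,\Gamma(p_2+n-k)}{\Gamma(p_1)\,\Gamma(p_2)}.
\]
To bring this into the form \eqref{eqn2.5}, I would multiply through by $\frac{\Gamma(p_1)\Gamma(p_2)}{\Gamma(p_1+p_2+n)}$ and use the gamma–beta relation \eqref{eqn2.2}: the right-hand side becomes $\sum_{k=0}^n \binom{n}{k} \frac{\Gamma(p_1+k)\Gamma(p_2+n-k)}{\Gamma(p_1+p_2+n)} = \sum_{k=0}^n \binom{n}{k} B(p_1+k, p_2+n-k)$, since $(p_1+k) + (p_2+n-k) = p_1+p_2+n$ regardless of $k$. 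Likewise the left-hand side becomes $\frac{\Gamma(p_1)\Gamma(p_2)}{\Gamma(p_1+p_2)} = B(p_1,p_2)$, which is precisely \eqref{eqn2.5}.

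There is essentially no hard obstacle here; the proof is a short computation once the two-way moment idea is in place. The only points requiring a word of care are: justifying that the $n$-th moment of $Y$ is finite (immediate from \eqref{eqn2.4}, so the interchange of expectation with the finite binomial sum is legitimate), and noting that the argument sum in each beta factor on the right is constant in $k$, which is what makes the common denominator $\Gamma(p_1+p_2+n)$ factor out cleanly. I would also remark that the known fact $X_1+X_2 \sim G(p_1+p_2)$ for independent gamma variables is what the whole identity rests on, and this is standard (it follows, e.g., from convolving the densities or from moment generating functions).
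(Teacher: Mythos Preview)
Your proposal is correct and follows essentially the same approach as the paper: compute $E(Y^n)$ once via $Y\sim G(p_1+p_2)$ and once via the binomial expansion of $(X_1+X_2)^n$ together with independence and \eqref{eqn2.4}, then equate and rewrite in terms of beta functions using \eqref{eqn2.2}. Your additional remarks on finiteness of moments and the constancy of $(p_1+k)+(p_2+n-k)$ are nice clarifications but not part of the paper's proof.
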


\begin{proof}
\noindent Since $ X_1+X_2=Y \sim G(p_1+p_2)$, we get,
 from  \eqref{eqn2.4},
\begin{equation} \label{eqn2.6}
E(Y^n)=\frac{\Gamma(p_1+p_2+n)}{\Gamma(p_1+p_2)}. 
\end{equation}

\noindent Alternatively, since $X_1 \sim G(p_1),\; X_2 \sim G(p_2)$ and are independent, we have by using the binomial theorem  
\begin{eqnarray}
E(Y^n)&=&E(X_1+X_2)^n=E\Big(\sum_{k=0}^n \binom{n}{k} X_1^kX_2^{n-k}\Big)\nonumber\\&=& \sum_{k=0}^n \binom{n}{k} E(X_1^k) E(X_2^{n-k})\nonumber\\
&=& \sum_{k=0}^n \binom{n}{k}\frac{\Gamma(p_1+k) \Gamma(p_2+n-k)}{\Gamma(p_1)\Gamma(p_2)} \label{eqn2.7}, 
\end{eqnarray}
using \eqref{eqn2.4}. 
 
\noindent Equating \eqref{eqn2.6} and \eqref{eqn2.7} , we obtain       

$$\frac{\Gamma(p_1+p_2+n)}{\Gamma(p_1+p_2)}=\sum_{k=0}^n \binom{n}{k}\frac{\Gamma(p_1+k) \Gamma(p_2+n-k)}{\Gamma(p_1)\Gamma(p_2)}$$
which leads to
$$\sum_{k=0}^n \binom{n}{k}\frac{\Gamma(p_1+k) \Gamma(p_2+n-k)}{\Gamma(p_1+p_2+n)}=\frac{\Gamma(p_1)\Gamma(p_2)}{\Gamma(p_1+p_2)}.$$
This proves the result.
\end{proof}

\begin{remarks} {\em 
\begin{enumerate} 
\item[(i)] Indeed, one could also consider $X_1 \sim G(\alpha, p_1)$ and $X_2 \sim G(\alpha, p_2)$, $\alpha >0$, the two-parameter gamma random variables. But, the result does not change as  as the powers of $\alpha$ cancel out.    
\item[(ii)] When $p_1=p_2=\frac{1}{2}$, we get, $$\sum_{k=0}^n \binom{n}{k}B\Big(\frac{1}{2}+k, \frac{1}{2}+n-k\Big) =\pi,\;\mbox{for all}\; n \geq 0, $$ which is an interesting representation for $\pi$. 
Also, for this case, it is shown later (see Remark 3.2) that \eqref{eqn2.5} reduces to 	\eqref{eqn1.1}.

\item[(iii)] When $p_1=x$, $p_2=y$ and $n=1$, we get, 
\begin{equation*}
B(x,y)=B(x,y+1)+B(x+1,y),  
\end{equation*}
the basic identity  in \eqref{eqn2.3}.  
Thus, Theorem 2.1 extends both the identities in \eqref{eqn1.1} and in \eqref{eqn2.3}.
\end{enumerate}
}
\end{remarks}
\noindent Our next result is an interesting identity that relates binomial coefficients and beta functions on one side and to a simple expression on the other side. The proof relies on the binomial inversion formula (see, for example, Aigner (2007)) which we include here for ease of reference. \\ 

\noindent {\bf The Binomial Inversion Formula.} For a positive sequence of $\{a_n\}_{n \geq 0 }$ of real numbers, define the real sequence $\{b_n\}_{n \geq 0 }$ by $$ b_n=\sum_{j=0}^{n}(-1)^{j}\binom{n}{j}a_j .$$  Then, for all $n \geq 0$, the binomial inversion of $b_n$ is given by  $$ a_n=\sum_{j=0}^{n}(-1)^{j}\binom{n}{j}b_j .$$ 

\begin{theorem} Let $s > 0$ and $n \geq 0$ be an integer. Then, 
\begin{eqnarray}\label{eqn2.8}
\sum_{j=0}^{n}(-1)^{j}\binom{n}{j}B(j+1,s)= \frac{1}{s+n}.
\end{eqnarray}
\end{theorem}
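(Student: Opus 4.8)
The plan is to recognize the left-hand side of \eqref{eqn2.8} as a binomial transform and then invert it. First I would set $b_n=\sum_{j=0}^n(-1)^j\binom{n}{j}B(j+1,s)$ and try to compute $b_n$ directly. The natural move is to push the alternating sum inside the integral representation \eqref{eqn2.1}: since $B(j+1,s)=\int_0^1 t^{j}(1-t)^{s-1}\,dt$, we get $b_n=\int_0^1\Big(\sum_{j=0}^n\binom{n}{j}(-t)^j\Big)(1-t)^{s-1}\,dt=\int_0^1(1-t)^{n}(1-t)^{s-1}\,dt=\int_0^1(1-t)^{n+s-1}\,dt=\frac{1}{n+s}$. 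So in fact $b_n=\frac{1}{s+n}$ already, and the identity is immediate — no inversion is even needed.

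However, since the authors explicitly flag the binomial inversion formula, the intended route is presumably the reverse: take the \emph{known} simple sequence and show its binomial transform produces the beta values, i.e. apply the inversion formula with $a_j=B(j+1,s)$, having first established the companion identity $\sum_{j=0}^n(-1)^j\binom{n}{j}\frac{1}{s+j}=B(n+1,s)$. That companion identity is itself the classical evaluation of $\int_0^1 t^{s-1}(1-t)^n\,dt$ expanded by the binomial theorem in $(1-t)^n$: writing $\frac{1}{s+j}=\int_0^1 t^{s+j-1}\,dt$ and summing gives $\int_0^1 t^{s-1}(1-t)^n\,dt=B(s,n+1)=B(n+1,s)$. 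Once we know $B(n+1,s)=\sum_{j=0}^n(-1)^j\binom{n}{j}\frac{1}{s+j}$, the Binomial Inversion Formula applied to the pair $a_j=B(j+1,s)$, $b_j=\frac{1}{s+j}$ yields exactly \eqref{eqn2.8}.

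I would carry out the steps in this order: (1) state that we will apply the inversion formula with $a_j:=B(j+1,s)$; (2) show $b_j:=\sum_{i=0}^j(-1)^i\binom{j}{i}B(i+1,s)$ equals $\frac{1}{s+j}$ by interchanging sum and integral and using the binomial theorem — or, running it the other way, establish $\sum_{i=0}^j(-1)^i\binom{j}{i}\frac{1}{s+i}=B(j+1,s)$ and invert; (3) conclude. The interchange of the finite sum with the integral in step (2) is trivially justified (finite sum), so there is essentially no analytic obstacle.

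The only thing to watch is a bookkeeping point: the inversion formula as stated requires the starting sequence to be positive, and $\{B(j+1,s)\}_{j\ge0}$ is indeed a sequence of positive reals for $s>0$, so the hypothesis is met; similarly $\{1/(s+j)\}_{j\ge0}$ is positive. The main ``obstacle'' is therefore purely expository — deciding whether to present the one-line direct integral computation or to route through the inversion formula as the authors evidently intend — rather than mathematical. I would present the inversion-based argument to match the paper's stated tools, relegating the direct computation to a remark if desired.
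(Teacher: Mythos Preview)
Your proposal is correct, and in fact your first ``direct'' computation is a genuinely shorter route than the paper's: by writing $B(j+1,s)=\int_0^1 t^j(1-t)^{s-1}\,dt$, summing under the integral, and evaluating $\int_0^1 (1-t)^{n+s-1}\,dt$, you get \eqref{eqn2.8} in one line with no inversion at all. The paper does \emph{not} do this. Instead it follows exactly the inversion route you describe second: it first records the companion identity $\sum_{j=0}^n(-1)^j\binom{n}{j}\frac{1}{s+j}=B(n+1,s)$ and then applies the Binomial Inversion Formula --- with $a_j=\frac{1}{s+j}$ and $b_n=B(n+1,s)$, not the other labeling you wrote down (so be careful with which sequence plays which role). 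The one substantive difference from your write-up is how the companion identity is obtained: you derive it by the same integral trick (expand $(1-t)^n$ inside $\int_0^1 t^{s-1}(1-t)^n\,dt$), whereas the paper imports it from the known probabilistic identity $\sum_{j=0}^n(-1)^j\binom{n}{j}\frac{s}{s+j}=\prod_{j=1}^n\frac{j}{s+j}$ (citing Peterson (2013) and Vellaisamy (2015)) and then rewrites the product as $sB(n+1,s)$. Your integral derivation is more self-contained; the paper's version ties the result back to the probabilistic theme but at the cost of invoking an external identity.
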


\begin{proof}
\noindent The following binomial identity is known:
\begin{eqnarray}\label{eqn2.9}
\sum_{j=0}^{n}(-1)^{j}\binom{n}{j}\left(\frac{s}{s+j}\right)=\prod_{j=1}^{n} \left(\frac{j}{s+j}\right).
\end{eqnarray}
Recently, Peterson (2013) and  Vellaisamy (2015) gave a probabilistic proof of the above identity.  Note that the right hand side of \eqref{eqn2.9} can also be written as 
\begin{eqnarray*}
\prod_{j=1}^{n} \left(\frac{j}{s+j}\right)
= \frac{\Gamma(n+1)\Gamma(s+1)}{\Gamma(s+n+1)}= \frac{\Gamma(n+1)\; s\Gamma(s)}{\Gamma(s+n+1)}=s B(n+1,s). 
\end{eqnarray*}
Then, the identity in \eqref{eqn2.8} becomes
\begin{eqnarray}\label{eqn2.10}
\sum_{j=0}^{n}(-1)^{j}\binom{n}{j}\left(\frac{1}{s+j}\right)= B(n+1,s). 
\end{eqnarray}
Applying  the binomial inversion formula to \eqref{eqn2.10} with $a_j=\frac{1}{s+j}$ and $b_n=B(n+1,s)$, we get $$\sum_{j=0}^{n}(-1)^{j}\binom{n}{j}  B(j+1,s)= \frac{1}
 {s+n},$$ which proves the result.     
\end{proof}

\begin{remark} {\em When $n=1$, equation \eqref{eqn2.8} becomes $$B(1,s)-B(2,s)=\frac{1}{s+1}=B(1,s+1),$$ which coincides with the basic beta-function identity in \eqref{eqn2.3}, when $x=1$. Thus, \eqref{eqn2.8} can be viewed as another generalization of \eqref{eqn2.3} in the case when $x$ is a positive integer. Also, when $n=2$, $$B(1,s)-2 B(2,s)+B(3,s)=\frac{1}{s+2},$$ which can be verified using the basic identity in \eqref{eqn2.3}. It may be of interest to provide a different proof of Theorem 2.2 based on induction or combinatorial arguments. }
\end{remark}

\noindent It is easy to see that the derivative of the beta function is  
\begin{equation} \label{eqn2.11}
\frac{\partial}{\partial y} B(x,y)=B(x,y) \Big( \psi(y)-\psi(x+y)\Big),
\end{equation}
 where $\psi(x)=\frac{\Gamma'(x)}{\Gamma(x)}$ is the digamma function. The following result is a consequence of Theorem 2.2. 
 


\begin{theorem}  For $s > 0$ and an integer $n \geq 0$, 
\begin{eqnarray}\label{eqn2.12}
\sum_{j=0}^{n}\sum_{i=0}^j (-1)^{j}\binom{n}{j} \frac{B(j+1,s)}{s+i}= \frac{1}{(s+n)^2}.
\end{eqnarray}
\end{theorem}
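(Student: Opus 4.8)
The plan is to obtain \eqref{eqn2.12} by differentiating the identity of Theorem 2.2 with respect to the parameter $s$. The starting point is the observation, via \eqref{eqn2.11}, that
$$\frac{\partial}{\partial s} B(j+1,s) = B(j+1,s)\bigl(\psi(s)-\psi(s+j+1)\bigr).$$
So the first step is to convert the digamma difference into the inner sum appearing in \eqref{eqn2.12}. Using the recurrence $\psi(z+1)=\psi(z)+1/z$ repeatedly gives $\psi(s+j+1)=\psi(s)+\sum_{i=0}^{j}\frac{1}{s+i}$, hence $\psi(s)-\psi(s+j+1)=-\sum_{i=0}^{j}\frac{1}{s+i}$, and therefore
$$\frac{\partial}{\partial s} B(j+1,s) = -\,B(j+1,s)\sum_{i=0}^{j}\frac{1}{s+i}.$$

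Next I would apply $\frac{\partial}{\partial s}$ to both sides of \eqref{eqn2.8}. Since the left side of \eqref{eqn2.8} is a finite sum, differentiation passes through the summation with no convergence issue, and by the displayed formula above each term contributes $(-1)^j\binom{n}{j}\frac{\partial}{\partial s}B(j+1,s) = -(-1)^j\binom{n}{j}B(j+1,s)\sum_{i=0}^{j}\frac{1}{s+i}$. On the right side, $\frac{\partial}{\partial s}\frac{1}{s+n} = -\frac{1}{(s+n)^2}$. Equating the two and multiplying through by $-1$ yields exactly
$$\sum_{j=0}^{n}\sum_{i=0}^{j}(-1)^{j}\binom{n}{j}\frac{B(j+1,s)}{s+i} = \frac{1}{(s+n)^2},$$
which is \eqref{eqn2.12}.

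There is no serious obstacle here: the argument is a one-line differentiation of Theorem 2.2. The only point requiring care is the telescoping identity $\psi(s)-\psi(s+j+1)=-\sum_{i=0}^{j}\frac{1}{s+i}$, which must be stated for the correct range of indices $i=0,\dots,j$ so that it matches the inner sum in \eqref{eqn2.12}; getting the endpoints of this sum right is the whole content of the step. I would also remark that differentiation under the (finite) sum is legitimate and that $B(j+1,s)$ is smooth in $s>0$, so every manipulation is valid on $s>0$.
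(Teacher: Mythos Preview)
Your proposal is correct and follows essentially the same argument as the paper: differentiate both sides of \eqref{eqn2.8} with respect to $s$, use \eqref{eqn2.11} for $\frac{\partial}{\partial s}B(j+1,s)$, telescope the digamma recurrence to obtain $\psi(s)-\psi(s+j+1)=-\sum_{i=0}^{j}\frac{1}{s+i}$, and cancel the common sign. Your added remarks on the legitimacy of differentiating a finite sum and on getting the index range $i=0,\dots,j$ right are appropriate but do not change the substance of the proof.
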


\begin{proof}The proof proceeds by taking the derivative of both sides of $(2.7)$ with respect to $s$. From the right-hand side, we get, 
\begin{equation}\label{eqn2.13}
\frac{\partial}{\partial s} \Big(\frac{1}{s+n}\Big)= \frac{-1}{(s+n)^2}.
\end{equation}
 Also, form the left-hand side,
\begin{align}
\frac{\partial}{\partial s}\sum_{j=0}^{n}(-1)^{j}\binom{n}{j}  B(j+1,s) =& \sum_{j=0}^{n}(-1)^{j}\binom{n}{j}\frac{\partial}{\partial s} B(j+1,s)\nonumber\\=& \sum_{j=0}^{n}(-1)^{j}\binom{n}{j}  B(j+1,s) \Big( \psi(s)-\psi(j+1+s)\Big), \label{eqn2.14}
\end{align}
using \eqref{eqn2.11}. Further, it is known that the digamma function $ \psi(x) $satisfies 
\begin{equation} \label{eqn2.15}
\psi(x+1)-\psi(x)=\frac{1}{x}. 
\end{equation}
Using \eqref{eqn2.15} iteratively leads to 
\begin{equation} \label{eqn2.16}
\psi(x+j+1)-\psi(x)=\displaystyle \sum_{i=0}^j \frac{1}{x+i},
\end{equation}
for a nonnegative integer $j$.

\noindent Using \eqref{eqn2.13} \eqref{eqn2.14} and \eqref{eqn2.16}, we get
\begin{eqnarray*}
\frac{-1}{(s+n)^2}&=&\sum_{j=0}^{n}(-1)^{j}\binom{n}{j}  B(j+1,s) \Big( \psi(s)-\psi(j+1+s)\Big)\\&=&\sum_{j=0}^{n} \sum_{i=0}^j (-1)^{j}\binom{n}{j}  B(j+1,s) (-1) \frac{1}{s+i},
\end{eqnarray*}
which proves the result.  
\end{proof}

Our aim next is to extend the identity in \eqref{eqn1.3}.

\begin{theorem}
Let $p>0$ and and $n$ be a positive integer. Then	
\begin{equation}\label{eqn2.17}
 \sum_{k=0}^{n} (-1)^k \binom{n}{k} B( p+k, p+n-k)=
\begin{cases*}
\displaystyle{	\frac {n! \Gamma(p) \Gamma(p+\frac{n}{2})} {\Gamma(\frac{n}{2}+1) \Gamma(2p+n)}}, & if $n$ is even \\
  \phantom{0}0,  & if $n$ is odd.
\end{cases*}	
\end{equation}	
\end{theorem}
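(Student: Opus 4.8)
The plan is to run the same moment-matching device used for Theorem 2.1, but now with the \emph{difference} of two i.i.d.\ gamma variables in place of their sum. Let $X_1,X_2$ be independent, each with distribution $G(p)$, and set $Z=X_1-X_2$. Expanding $Z^n=(X_1+(-X_2))^n$ by the binomial theorem, using independence and \eqref{eqn2.4}, gives
\[
E(Z^n)=\sum_{k=0}^n(-1)^k\binom nk E(X_1^{n-k})E(X_2^{k})=\sum_{k=0}^n(-1)^k\binom nk\frac{\Gamma(p+k)\,\Gamma(p+n-k)}{\Gamma(p)^2},
\]
so that, by \eqref{eqn2.2},
\[
\sum_{k=0}^n(-1)^k\binom nk B(p+k,p+n-k)=\frac{\Gamma(p)^2}{\Gamma(2p+n)}\,E(Z^n).
\]
Hence the theorem reduces to evaluating the moments $E(Z^n)$ by an independent route.

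The second step is to compute $E(Z^n)$ from the moment generating function. Since $E(e^{tX_i})=(1-t)^{-p}$ for $|t|<1$, independence yields
\[
M_Z(t)=E(e^{tX_1})\,E(e^{-tX_2})=(1-t)^{-p}(1+t)^{-p}=(1-t^2)^{-p},\qquad |t|<1 .
\]
Expanding by the generalized binomial series,
\[
(1-t^2)^{-p}=\sum_{m\ge 0}\frac{\Gamma(p+m)}{\Gamma(p)\,m!}\,t^{2m},
\]
and comparing with $\sum_{n\ge0}E(Z^n)t^n/n!$ (legitimate because $M_Z$ is analytic at $0$) shows $E(Z^n)=0$ for odd $n$ and $E(Z^{2m})=\dfrac{(2m)!\,\Gamma(p+m)}{\Gamma(p)\,m!}$; the vanishing of odd moments also follows at once from the symmetry of $Z$ about $0$.

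Finally I would substitute back into the reduction above. For odd $n$ the right-hand side is $0$; for $n=2m$ it equals
\[
\frac{\Gamma(p)^2}{\Gamma(2p+n)}\cdot\frac{(2m)!\,\Gamma(p+m)}{\Gamma(p)\,m!}=\frac{n!\,\Gamma(p)\,\Gamma\!\bigl(p+\tfrac n2\bigr)}{\Gamma\!\bigl(\tfrac n2+1\bigr)\,\Gamma(2p+n)},
\]
which is precisely \eqref{eqn2.17}. The only real content beyond bookkeeping is the second step: observing that the two-sided generating function collapses to $(1-t^2)^{-p}$, so that the alternating cross-moment sum has the stated closed form. A minor technical point to address is the justification of reading coefficients off the power series, which is immediate on $|t|<1$ where $M_Z$ is analytic (and can be avoided entirely by expanding $(1-t^2)^{-p}$ term by term from the start).
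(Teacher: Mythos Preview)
Your proposal is correct and follows essentially the same route as the paper: form $Z=X_1-X_2$ with $X_i\sim G(p)$ i.i.d., expand $E(Z^n)$ once via the binomial theorem to obtain the alternating beta sum, and once via the moment generating function $M_Z(t)=(1-t^2)^{-p}$ to read off the moments, then equate. The only cosmetic difference is the order in which you carry out the two expansions.
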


\begin{proof}To prove the result, we consider the rv $X= X_1-X_2$, where $X_1$ and $X_2$ are as before independent gamma $G(p)$ variables.
Note first that since $X_1$ and $X_2$ are independent and identically distributed, we have   $X_1-X_2 \stackrel{d}{=}  X_2-X_1$. Here $\stackrel{d}{=}$ denotes the equality in distributions. That is, $X$ and $-X$ have the same distributions on $\mathbb{R}$, which implies the 
density of $X$ is symmetric about zero. Hence, $E(X^n)=0$ if $n$ is an odd integer.

\noindent Next, we compute the even moments of $X$. The earlier approach of finding the moments of $X$ using the probability density function is tedious. This is because the  density of $X$ is very complicated and it involves Whittaker's W-function (see Mathai (1993)). Therefore, we use the moment generating function ($MGF$) approach to find the moments of $X$.

\noindent It is known (see Rohatgi and Saleh (2002, p.~212) that the  $MGF$ of $X_1$ is $M_{X_1}(t)= E(e^{tX_1})= (1-t)^{-p}.$ Hence, the $MGF$ of $X$ is
 \begin{align}
  M_{X}(t) =& M_{X_1-X_2}(t)=M_{X_1}(t)M_{X_2}(-t) \nonumber \\
            =& (1-t)^{-p} (1+t)^{-p} \nonumber \\
            =& (1-t^2)^{-p}        
\end{align}
which exits for $|t|<1.$	

\noindent Using the result, for $p>0$ and  |q|<1, that
\begin{equation*}
(1-q)^{-p}= \sum_{n=0}^{\infty} \frac{\Gamma(n+p) q^n}{\Gamma(n+1) \Gamma(p)},
\end{equation*}
we have
\begin{equation} \label{eqn2.19}
M_{X}(t)= (1-t^2)^{-p}= \sum_{n=0}^{\infty} \frac{\Gamma(n+p) t^{2n}}{\Gamma(n+1) \Gamma(p)}.
\end{equation}
Hence, for  $n \geq 1$, we have from \eqref{eqn2.19}
\begin{equation} \label{eqn2.20}
E(X^{2n})=M_{X}^{(2n)}(t)|_{t=0}=  \frac{\Gamma(n+p) {(2n)!}}{\Gamma(n+1) \Gamma(p)},
\end{equation}	
where $f^{(k)}$ denotes the $k$-th derivative of $f$. Thus,
we have shown that	
\begin{equation} \label{eqn2.21}
E(X^{n})=
\begin{cases*}
\displaystyle  \frac{n! \Gamma(\frac{n}{2}+p) }{\Gamma(\frac{n}{2}+1) \Gamma(p)}, & if $n$ is even\\
\phantom{0}0,  & if $n$ is odd.
\end{cases*}
\end{equation}
Next we compute the moments of $X$, using the binomial theorem. Note that
\begin{align}\label{eqn2.22}
E(X^n) =& E(X_1-X_2)^n= \sum_{k=0}^{n} (-1)^k \binom{n}{k} E(X_1^k) E(X_2^{n-k}) \nonumber \\
=& \sum_{k=0}^{n}  (-1)^k \binom{n}{k} \frac{\Gamma(p+k)}{\Gamma(p)}\frac{\Gamma(p+n-k)}{\Gamma(p)}.    \end{align}
Equating \eqref{eqn2.21} and \eqref{eqn2.22}, we get
\begin{align}\label{eqn2.23}
\sum_{k=0}^{n}  (-1)^k \binom{n}{k} {\Gamma(p+k)}{\Gamma(p+n-k)}=
\begin{cases*}
 \displaystyle \frac{n!\Gamma(\frac{n}{2}+p) \Gamma(p)}{\Gamma(\frac{n}{2}+1)}, & if $n$ is even \\ 
 \phantom{0}0,  & if $n$ is odd. 
\end{cases*}
\end{align}
which is an interesting identity involving gamma functions and 
binomial coefficients.
Dividing now  both sides of \eqref{eqn2.23}  by $\Gamma(2p+n)$, the result follows.
\end{proof}

\noindent We next show that the identitiy in \eqref{eqn1.2} follows as a special case.
\begin{corollary} Let $n$ be a positive integer. Then 
\begin{equation}\label{eqn2.24}
\sum_{k=0}^{n} (-1)^k \binom{2k}{k}  \binom{2n-2k}{n-k} =
\begin{cases*}
2^n \binom{n}{\frac{n}{2}}, & if  $n$ is even  \\
\phantom{0 }0, & if  $n$ is odd.
\end{cases*} 
\end{equation}
\end{corollary}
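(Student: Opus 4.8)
The plan is to obtain \eqref{eqn2.24} by specializing Theorem 2.4 to $p=\tfrac12$ and then converting the beta functions into central binomial coefficients. The one external ingredient needed is the half-integer value of the gamma function,
\[
\Gamma\!\left(k+\tfrac12\right)=\frac{(2k)!}{4^{k}\,k!}\,\sqrt{\pi},\qquad k\ge 0 .
\]
Using this together with \eqref{eqn2.2}, I would first rewrite
\[
B\!\left(\tfrac12+k,\ \tfrac12+n-k\right)=\frac{\Gamma\!\left(\tfrac12+k\right)\Gamma\!\left(\tfrac12+n-k\right)}{\Gamma(n+1)}=\frac{\pi}{4^{n}\,n!}\cdot\frac{(2k)!\,(2n-2k)!}{k!\,(n-k)!},
\]
and then multiply by $\binom{n}{k}$ to collect the factorials into a product of two central binomial coefficients:
\[
\binom{n}{k}\,B\!\left(\tfrac12+k,\ \tfrac12+n-k\right)=\frac{\pi}{4^{n}}\binom{2k}{k}\binom{2n-2k}{n-k}.
\]

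Next I would sum this identity against $(-1)^{k}$ over $0\le k\le n$. The right-hand side becomes $\tfrac{\pi}{4^{n}}$ times the alternating convolution appearing in \eqref{eqn2.24}, while the left-hand side is precisely the sum on the left of \eqref{eqn2.17} with $p=\tfrac12$. Hence the sum in \eqref{eqn2.24} equals $\tfrac{4^{n}}{\pi}$ times the right-hand side of \eqref{eqn2.17}. For odd $n$ this is $0$, as required. For even $n$, substituting $p=\tfrac12$ into the even-case value of \eqref{eqn2.17} and using $\Gamma(\tfrac12)=\sqrt{\pi}$ gives $\dfrac{n!\,\sqrt{\pi}\,\Gamma(\tfrac n2+\tfrac12)}{\Gamma(\tfrac n2+1)\,\Gamma(n+1)}=\dfrac{\sqrt{\pi}\,\Gamma(\tfrac n2+\tfrac12)}{\Gamma(\tfrac n2+1)}$; multiplying by $\tfrac{4^{n}}{\pi}$ and applying the half-integer formula once more with $n=2m$ turns $\Gamma(m+\tfrac12)/\Gamma(m+1)$ into $\tfrac{\sqrt{\pi}}{4^{m}}\binom{2m}{m}$, so the whole expression collapses to $4^{m}\binom{2m}{m}=2^{n}\binom{n}{n/2}$.

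I do not expect a genuine obstacle here: once the identity $\binom{n}{k}B(\tfrac12+k,\tfrac12+n-k)=\tfrac{\pi}{4^{n}}\binom{2k}{k}\binom{2n-2k}{n-k}$ is in place, the corollary is a direct substitution into Theorem 2.4, and the only point requiring care is the bookkeeping of the powers of $4$ and the factors of $\sqrt{\pi}$ in the even-$n$ simplification. As an independent sanity check, the same conversion applied to \eqref{eqn2.5} at $p_{1}=p_{2}=\tfrac12$ turns $\sum_{k}\binom{n}{k}B(\tfrac12+k,\tfrac12+n-k)=\pi$ into \eqref{eqn1.1}, consistent with Remark 2.1(ii).
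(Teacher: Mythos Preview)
Your proposal is correct and follows essentially the same route as the paper: specialize the theorem to $p=\tfrac12$ and use the half-integer gamma value $\Gamma(k+\tfrac12)=\dfrac{(2k)!}{4^{k}k!}\sqrt{\pi}$ (the paper writes this as $\Gamma(n+\tfrac12)/\Gamma(\tfrac12)=(2n)!/(n!\,4^{n})$) to convert to central binomial coefficients. The only cosmetic difference is that the paper substitutes $p=\tfrac12$ into the gamma-function identity \eqref{eqn2.23} directly, whereas you substitute into the beta-function form \eqref{eqn2.17} and carry an extra factor $\Gamma(2p+n)=\Gamma(n+1)=n!$ through the computation; the algebra is otherwise identical.
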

 
\begin{proof} Let $p=\frac{1}{2}$ in \eqref{eqn2.23} and it suffices to consider  the case $n$ is even. Then
\begin{align*}
\sum_{k=0}^{n}  (-1)^k \binom{n}{k} {\Gamma(k+\frac{1}{2})}{\Gamma(n-k+\frac{1}{2})}=
\displaystyle \frac{{n!}\Gamma(\frac{n}{2}+\frac{1}{2}) \Gamma(\frac{1}{2})}{\Gamma(\frac{n}{2}+1)}
\end{align*}
That is,
\begin{align} \label{eqn2.25}
\sum_{k=0}^{n}  (-1)^k \binom{n}{k} \frac{\Gamma(k+\frac{1}{2})}{\Gamma(\frac{1}{2})}\frac{\Gamma(n-k+\frac{1}{2})}{\Gamma(\frac{1}{2})}  =
\displaystyle \frac{{n!}\Gamma(\frac{n}{2}+\frac{1}{2}) }{{\Gamma(\frac{1}{2})}\Gamma(\frac{n}{2}+1)}.
\end{align}
Note  that, 
\begin{eqnarray}
\frac{\Gamma(n+\frac{1}{2})}{\Gamma(\frac{1}{2})}&=&\frac{\Big(n-\frac{1}{2}\Big) \Big(n-\frac{3}{2}\Big) \cdots \Big(\frac{3}{2}\Big) \Big(\frac{1}{2}\Big) \Gamma \Big(\frac{1}{2} \Big)}{\Gamma \Big(\frac{1}{2} \Big)}\nonumber\\&=& 
\frac{(2n-1) \cdot (2n-3) \cdots 3 \cdot 1}{2^n} \nonumber \\ &=& \frac{(2n)!}{ n! 4^n}. \label{eqn2.26}
\end{eqnarray}

\noindent Using \eqref{eqn2.26} in \eqref{eqn2.25}, we get
\begin{align*} 
\displaystyle \sum_{k=0}^{n}  (-1)^k  \frac{n!}{k! (n-k)!}\frac{ (2k)!}{4^k k!} \frac{(2n-2k)!}{4^{(n-k)} (n-k)! } =& \displaystyle
 \frac{n! n!}{4^{\frac{n}{2}} (\frac{n}{2})!(\frac{n}{2})!} 
\end{align*}
That is,
\begin{equation}\label{eqn2.27}
\sum_{k=0}^{n} (-1)^k \binom{2k}{k}  \binom{2n-2k}{n-k} =
\displaystyle \frac{n! 4^n }{(\frac{n}{2})!(\frac{n}{2})! 4^{\frac{n}{2}}} \nonumber \\
  = \displaystyle 2^n \binom{n}{\frac{n}{2}},
\end{equation}
 which proves the result.
\end{proof}

\section{An Extension}

In this section, we discuss an extension of the beta-function identity given in Theorem 2.1. This result in particular extends the main result of Chang and Xu (2011). Let $p_i > 0,\;1 \leq i \leq m,$ and consider the beta function of $m$-variables defined by 
\begin{equation}
B(p_1,\cdots,p_m)= \int_\T x_1^{p_1-1}x_2^{p_2-1} \cdots x_m^{p_m-1} (1-p_1-p_2-\cdots p_m)^{p_m-1}dx_1 \cdots dx_m,
\end{equation} 
where $\T = (0,1) \times \cdots \times (0,1)$. It is well known that $B(p_1,\cdots,p_m)$ can be expressed as a ratio of gamma functions as
\begin{equation}
B(p_1,\cdots,p_m)= \frac{\Gamma(p_1)\Gamma(p_2)\cdots\Gamma(p_m)}{\Gamma(p_1+\cdots+p_m)}.
\end{equation}

\begin{theorem} Let $p_1, \cdots, p_m \geq 0.$ Then for any non-negative integer $n$, 
\begin{equation}\label{eqn3.3}
\sum_{\substack{k_j\geq 0;~ 1 \leq j \leq n; \\ \sum_{j=1}^m  k_j=n}} \binom {n} {k_1,\cdots, k_m} B(p_1+k_1,\cdots,p_m+k_m)=B(p_1,\cdots,p_m)
\end{equation}
where $\binom{n}{k_1,\cdots,k_m}= \frac{n!}{k_1! \cdots k_m!}$ denotes the multinomial coefficient. 
\end{theorem}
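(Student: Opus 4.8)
The plan is to repeat, in the $m$-variable setting, exactly the moment-matching argument used for Theorem 2.1, with the multinomial theorem replacing the binomial theorem. First I would take independent random variables $X_i \sim G(p_i)$ for $1 \le i \le m$, assuming $p_i > 0$ (the boundary case $p_i = 0$ is degenerate, since then $B(p_1,\dots,p_m)$ is not finite, so it is either excluded or recovered by a limiting argument). By the standard additivity of independent gamma variables sharing the scale parameter $1$, one has $Y := X_1 + \cdots + X_m \sim G(p_1 + \cdots + p_m)$.

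Next I would compute $E(Y^n)$ in two ways. Directly from \eqref{eqn2.4} applied to $Y$, one gets
\begin{equation*}
E(Y^n) = \frac{\Gamma(p_1 + \cdots + p_m + n)}{\Gamma(p_1 + \cdots + p_m)}.
\end{equation*}
On the other hand, the multinomial theorem gives $Y^n = \sum_{k_1 + \cdots + k_m = n} \binom{n}{k_1,\dots,k_m} X_1^{k_1} \cdots X_m^{k_m}$, and taking expectations of this finite sum, using independence to factor each term and then \eqref{eqn2.4} on each factor, yields
\begin{equation*}
E(Y^n) = \sum_{\substack{k_j \ge 0;~ 1 \le j \le m;\\ \sum_{j=1}^m k_j = n}} \binom{n}{k_1,\dots,k_m} \prod_{i=1}^m \frac{\Gamma(p_i + k_i)}{\Gamma(p_i)}.
\end{equation*}

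Equating the two expressions for $E(Y^n)$ and dividing both sides by $\Gamma(p_1 + \cdots + p_m + n)$, I would observe that, since $\sum_{i=1}^m (p_i + k_i) = (p_1 + \cdots + p_m) + n$ for every index tuple in the sum, each summand on the left is precisely $B(p_1 + k_1, \dots, p_m + k_m)$ by the gamma-ratio representation (3.2), while the right-hand side is $\prod_{i=1}^m \Gamma(p_i) / \Gamma(p_1 + \cdots + p_m) = B(p_1,\dots,p_m)$. This is exactly \eqref{eqn3.3}. Taking $m = 2$ recovers Theorem 2.1, and taking $p_i = 1/2$ for all $i$ should, after the same gamma simplifications as in the proof of Corollary 2.1, reproduce \eqref{eqn1.2} of Chang and Xu (2011).

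I do not expect a genuine obstacle here: the interchange of expectation and summation is trivial because the sum is finite with all terms integrable (indeed nonnegative); the only points requiring a word of care are the common-scale hypothesis needed for gamma additivity, which holds since each $X_i$ has scale $1$, and the interpretation of the statement at the degenerate boundary $p_i = 0$, where the cleanest route is to restrict to $p_i > 0$ and note the general case follows by continuity where meaningful.
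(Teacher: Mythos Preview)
Your proposal is correct and follows essentially the same route as the paper: independent $X_i \sim G(p_i)$, additivity to get $Y \sim G(\sum p_i)$, two computations of $E(Y^n)$ via \eqref{eqn2.4} and the multinomial theorem, then rearranging to the beta-function form. Your extra remarks on the degenerate case $p_i=0$ and the common-scale hypothesis are sound but go slightly beyond what the paper writes.
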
 

\begin{proof}Let $X_1,\cdots X_m$ be $m$ independent gamma random variables, where $X_i \sim G(p_i),$ 
$1 \leq i \leq m$. Then it is known that 
$$Y= \sum_{i=1}^m X_i \sim G(p_1+\cdots+p_m).$$ 
Also, from \eqref{eqn2.4}, 
\begin{equation} \label{eqn3.4}
 E(Y^n)=\frac{\Gamma(p_1 + p_2+ \cdots p_m+ n)}{\Gamma(p_1+\cdots+p_m)}.
\end{equation}
 Since $X_i$'s  are independent, we have by multinomial theorem, 
\begin{eqnarray}
E(X_1+\cdots+X_m)^n&=& E \Bigg[\sum_{\substack{k_j \geq 0,\;1 \leq j \leq m\\ \sum_{J=1}^m  k_j=n}}  \binom {n} {k_1,\cdots, k_m} X_1^{k_1}\cdots X_m^{k_m}\Bigg] \nonumber
 \\&=& \sum_{\substack{k_j \geq 0, \;1 \leq j \leq m\\ \sum_{J=1}^m  k_j=n}}\binom {n} {k_1,\cdots, k_m} E(X_1^{k_1}) \cdots E(X_m^{k_m})\nonumber\\
 &=& \sum_{\substack{x_j \geq 0,\;1 \leq j \leq m\\ \sum_{J=1}^m  k_j=n}} \binom {n} {k_1,\cdots, k_m}
\frac{\Gamma(p_1+k_1)\cdots \Gamma(p_m+k_m)}{\Gamma (p_1)\cdots \Gamma(p_m)}. \label{eqn3.5}  
\end{eqnarray} 
Equating \eqref{eqn3.4} and \eqref{eqn3.5}, we obtain 
$$ \sum_{\substack{k_j \geq 0,\; 1 \leq j \leq m\\ \sum_{J=1}^m  k_j=n}}\binom {n} {k_1,\cdots, k_m}
\frac{\Gamma(p_1+k_1)\cdots \Gamma(p_m+k_m)}{\Gamma (p_1)\cdots \Gamma(p_m)}=\frac{\Gamma(p_1 + p_2+ \cdots p_m+ n)}{\Gamma(p_1+\cdots+p_m)}.$$
That is, 
$$ \sum_{\substack{k_j \geq 0,\; 1 \leq j \leq m\\ \sum_{J=1}^m  k_j=n}}\binom {n} {k_1,\cdots, k_m}
\frac{\Gamma(p_1+k_1)\cdots \Gamma(p_m+k_m)}{\Gamma(p_1 + p_2+ \cdots p_m+ n)}=\frac{\Gamma (p_1)\cdots \Gamma(p_m)}{\Gamma(p_1+\cdots+p_m)},$$
from which the result follows.  
\end{proof}

\begin{remark} Obviously, when $m=2,$ the identity in \eqref{eqn3.3} reduces to 
\begin{equation*}
\displaystyle \sum_{\substack{k_j\geq 0;~ 1 \leq j \leq 2; \\ k_1+k_2=n}} \binom {n} {k_1, k_2} B(p_1+k_1, p_2+k_2)= \sum_{k=0}^n \binom{n}{k}B(p_1+k,p_2+n-k)=B(p_1,p_2),
\end{equation*}	
which is \eqref{eqn2.5}.
\end{remark}

\noindent Our next result shows that the identity in \eqref{eqn1.2} follows as a special case.

\begin{corollary}
When $p_1=p_2=\cdots=p_m=\frac{1}{2}$, the identity in \eqref{eqn3.3} reduces to
\begin{equation}\label{eqn3.6}
 \sum_{\substack{k_j \geq 0,\; 1 \leq j \leq m\\ \sum_{J=1}^m  k_j=n}} \binom{2k_1}{k_1}\cdots\binom{2k_m}{k_m}=\frac{4^n}{n!} \frac{\Gamma(n+\frac{m}{2})}{\Gamma(\frac{m}{2})},
 \end{equation}
  for all integers $m,n \geq 1$. 
\end{corollary}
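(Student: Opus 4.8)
The plan is to specialize Theorem 3.1 to $p_1 = \cdots = p_m = \tfrac12$ and then rewrite each $\Gamma(\tfrac12 + k_i)$ as a central binomial coefficient, using the elementary identity \eqref{eqn2.26}. First I would set $p_i = \tfrac12$ in \eqref{eqn3.3} and apply the gamma-function representation \eqref{eqn3.2} to both sides: the right-hand side becomes $B(\tfrac12,\dots,\tfrac12) = \pi^{m/2}/\Gamma(m/2)$, while a typical summand on the left becomes $\dfrac{n!}{k_1!\cdots k_m!}\cdot\dfrac{\prod_{i=1}^m \Gamma(k_i + \tfrac12)}{\Gamma(n + m/2)}$, the denominator $\Gamma(n+m/2)$ coming from $\sum_i k_i = n$. (One should first note that Theorem 3.1 does apply here, since $\tfrac12 \geq 0$.)

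The key computational step is then to invoke \eqref{eqn2.26} in the form $\Gamma(k + \tfrac12) = \dfrac{(2k)!}{k!\,4^k}\sqrt{\pi}$ for each factor $\Gamma(k_i + \tfrac12)$. Because $\sum_{i=1}^m k_i = n$, the powers of $4$ in the denominator multiply to $4^n$ and the $m$ factors of $\sqrt{\pi}$ multiply to $\pi^{m/2}$, so $\prod_{i=1}^m \Gamma(k_i + \tfrac12) = \dfrac{\pi^{m/2}}{4^n}\prod_{i=1}^m \dfrac{(2k_i)!}{k_i!}$. Combining this with the multinomial coefficient $\dfrac{n!}{k_1!\cdots k_m!}$ produces $n!\prod_{i=1}^m \dfrac{(2k_i)!}{(k_i!)^2} = n!\prod_{i=1}^m \binom{2k_i}{k_i}$, so the entire left-hand side of \eqref{eqn3.3} collapses to $\dfrac{n!\,\pi^{m/2}}{4^n\,\Gamma(n+m/2)}\displaystyle\sum_{\sum k_j = n}\prod_{i=1}^m\binom{2k_i}{k_i}$.

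Finally, equating this with $\pi^{m/2}/\Gamma(m/2)$, the common factor $\pi^{m/2}$ cancels, and solving for the sum gives $\displaystyle\sum_{\sum k_j = n}\prod_{i=1}^m\binom{2k_i}{k_i} = \dfrac{4^n}{n!}\cdot\dfrac{\Gamma(n+m/2)}{\Gamma(m/2)}$, which is exactly \eqref{eqn3.6}. There is no genuine obstacle: this is a substitution followed by bookkeeping. The only points requiring a little care are tracking the $\pi^{m/2}$ and $4^n$ factors so that the multinomial coefficient recombines cleanly with $\prod_i (2k_i)!/k_i!$ into the product of central binomial coefficients, and confirming that $\sum_i k_i = n$ is what makes both the $4^n$ and the $\Gamma(n+m/2)$ appear. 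As an alternative one could avoid Theorem 3.1 entirely and substitute $p = \tfrac12$ into a multinomial version of \eqref{eqn2.22}, but routing through \eqref{eqn3.3} is the shortest path.
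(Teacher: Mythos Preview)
Your proposal is correct and follows essentially the same route as the paper: substitute $p_i=\tfrac12$ into \eqref{eqn3.3}, rewrite both sides via the gamma representation of the multivariate beta function, apply \eqref{eqn2.26} to each $\Gamma(k_i+\tfrac12)$, and simplify. The only cosmetic difference is that the paper works with the ratios $\Gamma(k_i+\tfrac12)/\Gamma(\tfrac12)$ directly, whereas you carry the explicit factors of $\sqrt{\pi}$ and cancel $\pi^{m/2}$ at the end.
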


\begin{proof}
Putting $p_1=p_2=\cdots p_m=\frac{1}{2}$ in \eqref{eqn3.3}, we obtain, 
$$\sum_{\substack{k_j \geq 0,\; 1 \leq j \leq m\\ \sum k_j=n}} \binom {n} {k_1,\cdots, k_m} B\Big(\frac{1}{2}+k_1,\cdots, \frac{1}{2}+k_m\Big)=B\Big(\frac{1}{2},\cdots, \frac{1}{2}\Big)$$
This implies, 
\begin{eqnarray*}
 \sum_{\substack{k_j \geq 0,\; 1 \leq j \leq m\\ \sum k_j=n}} \binom {n} {k_1,\cdots, k_m} \frac{\Gamma(\Big(\frac{1}{2}+k_1\Big),\cdots, \Gamma \Big(\frac{1}{2}+k_m\Big)}{\Gamma \Big(n+\frac{m}{2}\Big)}&=&\frac{\Gamma \Big(\frac{1}{2}\Big) \cdots \Gamma \Big(\frac{1}{2}\Big)}{\Gamma \Big(\frac{m}{2}\Big)},
\end{eqnarray*}
or, equivalently, 
\begin{eqnarray*}
 \sum_{\substack{k_j \geq 0,\; 1 \leq j \leq m\\ \sum k_j=n}} \binom {n} {k_1,\cdots, k_m} \frac{\Gamma(\Big(\frac{1}{2}+k_1\Big),\cdots, \Gamma \Big(\frac{1}{2}+k_m\Big)}{\Gamma \Big(\frac{1}{2}\Big) \cdots \Gamma \Big(\frac{1}{2}\Big)}=\frac{\Gamma \Big(n+\frac{m}{2}\Big)}{\Gamma \Big(\frac{m}{2}\Big)}.
\end{eqnarray*}
Using \eqref{eqn2.26}, we get 
\begin{eqnarray} \label{eqn3.7}
 \sum_{\substack{k_j \geq 0,\; 1 \leq j \leq m\\ \sum k_j=n}} \binom {n} {k_1,\cdots, k_m} \frac{(2k_1)!,\cdots, (2k_m)!}{4^{k_1}(k_1)! \cdots 4^{k_m}(k_m)!} 
=\frac{\Gamma \Big(n+\frac{m}{2}\Big)}{\Gamma \Big(\frac{m}{2}\Big)}.
\end{eqnarray}
We can write \eqref{eqn3.7} as 
\begin{eqnarray}
 \sum_{\substack{k_j \geq 0,\; 1 \leq j \leq m\\ \sum k_j=n}} \binom {2k_1} {k_1} \binom {2k_2} {k_2} \cdots \binom {2k_m} {k_m}=\frac{4^n}{n!}\Bigg(\frac{\Gamma \Big(n+\frac{m}{2}\Big)}{\Gamma \Big(\frac{m}{2}\Big)}\Bigg),
\end{eqnarray}
which proves the corollary. 
\end{proof}

\begin{remark} \label{rem3.2} Note that  when $m=2$, \eqref{eqn3.6} reduces tom \eqref{eqn1.1}. This implies also that when $p=\frac{1}{2}$, \eqref{eqn2.5} reduces to 	\eqref{eqn1.1}.
\end{remark}

\begin{remark} {\em
\begin{enumerate}
\item[(i)] Let $m$ be even so that $m=2l$ for some positive integer $l$. Then the right hand side of \eqref{eqn3.6} is 
$$ \frac{4^n}{n!}\frac{\Gamma (n+l)}{\Gamma (l)}=4^n \binom{n+l-1}{n}=4^n \binom{n+\frac{m}{2}-1}{n}.$$ 

Similarly, when $m$ is odd, say $m=2l+1$, 
\begin{eqnarray*}
\frac{4^n}{n!}\frac{\Gamma (n+\frac{m}{2})}{\Gamma (\frac{m}{2})}&=& \frac{4^n}{n!}\frac{\Gamma (n+l+\frac{1}{2})}{\Gamma (l+\frac{1}{2})}=\frac{4^n}{n!}\Bigg(\frac{\Gamma (n+l+\frac{1}{2})}{\Gamma(\frac{1}{2})}\Bigg)\Bigg(\frac{\Gamma(\frac{1}{2})}{\Gamma (l+\frac{1}{2})}\Bigg)\\
&=& \binom{2n+2l}{2n}\Bigg(\frac{(2 n)!}{n!\; n!}\Bigg) \Bigg(\frac{l!\; n!}{(n+ l)!}\Bigg)\; (\mbox{using}\; \eqref{eqn2.6})\\
&=&\frac{\binom{2n+2l}{2n} \binom{2n}{n}}{\binom{n+l}{n}}\\
&=&\frac{\binom{2n+m-1}{2n} \binom{2n}{n}}{\binom{n+\frac{m-1}{2}}{n}}, 
\end{eqnarray*}
since $2l={m-1}$. Thus, we have from \eqref{eqn3.6},

\begin{equation} \label{eqn3.9}
   \sum_{\substack{k_j \geq 0,\; 1 \leq j \leq m\\ \sum k_j=n}} \binom {2k_1} {k_1} \binom {2k_1} {k_1} \binom {2k_2} {k_2} \cdots \binom {2k_m} {k_m}= \left\{
     \begin{array}{lr} \displaystyle
       4^n \binom{n+\frac{m}{2}-1}{n}, &  \mbox{if}\; m\;\mbox{is even}\\
       & \\
      \displaystyle \frac{\binom{2n+m-1}{2n} \binom{2n}{n}}{\binom{n+\frac{m-1}{2}}{n}}, &  \mbox{if}\; m\;\mbox{is odd},
     \end{array}
   \right.
\end{equation}
which is equation $(3)$ of Miki{\' c} (2016). Indeed, Miki{\' c} (2016)
provided a combinatorial proof of the above result based on recurrence relations.
\end{enumerate}
}
\end{remark}

\noindent {\bf Acknowledgments}. This work was completed while the first author was visiting  the Department of Statistics and Probability, Michigan State University, during Summer-2017. We are grateful to Professor Frederi Viens for his support and encouragements. 
\vone
\noindent{\bf{\Large References}} 

\noindent Aigner, M. (2007). {\it A Course in Enumeration}. Berlin, Springer-Verlag.

\noindent Bagdasaryan, A. (2015).  A combinatorial identity involving gamma function and Pochhammer symbol {\it Appl. Math. Comput.}, {\bf 256}, 125-130. 

\noindent Chang, G. and  Xu, C. (2011). Generalization and probabilistic proof of a combinatorial identity. {\it Amer. Math. Monthly}, {\bf 118}, 175-177.

\noindent De Angelis, V. (2006).  Pairings and signed permutations. {\it Amer. Math. Monthly}, {\bf 113},
642-644.


\noindent Mathai, A.M. (1993). On non-central generalized Laplacianness of quadratic forms in
normal variables. {\it J. Multivariate Anal.}, {\bf 45}, 239-246.

\noindent Mickic{\' c}, J (2016).  A proof of a famous identity concerning the convolution of the central binomial coefficients. {\it J. Integer Seq.}, {\bf 19}, Article 16.6.6. 

\noindent Nagy, G. V. (2012). A combinatorial proof of Shapiro's Catalan convolution. {\it Adv. in Appl. Math.}, {\bf 49}, 391-396.

 \noindent Pathak, A. K. (2017). A simple probabilistic proof for the alternating convolution of the central binomial coefficients. To appear in {\it  Amer. Statist.}

\noindent Peterson, J. (2013). A probabilistic proof of a binomial identity. {\it Amer. Math. Monthly}, {\bf 120}, 558-562.


\noindent Petkovsek, M, Wilf, H and Zeilberger, D. (1996).  $A=B$. {\it  https://www.math.upenn.edu/wilf \\
	/AeqB.html.}

\noindent Riordan, J. (1968). {\it Combinatorial Identities}. Wiley, New York.
 
 \noindent Rohatgi, V. K. and Saleh. A. K. Md. E. (2002). {\it An Introduction to Probability and Statistics}. Wiley, New York.
 
\noindent Spivey, M. Z. (2014). A combinatorial proof for the alternating convolution of the central binomial
coefficients. {\it Amer. Math. Monthly}, {\bf 121}, 537-540. 
 
\noindent Srinivasan, G. K. (2007). The gamma function: an eclectic tour.  {\it Amer. Math. Monthly}, {\bf 114}, 297-316.

\noindent Stanley, R. (1997). {\it Enumerative Combinatorics}, Vol. 1, Cambridge Studies in Advanced Mathematics, {\bf 49}, Cambridge University Press.

\noindent Sved, M.(1984).  Counting and recounting: the aftermath, {\it Math. Intelligencer}, {\bf 6}, 44-46.

\noindent Vellaisamy, P. and A. Zeleke, A. (2017). Exponential order statistics, the Basel problem and
combinatorial identities. Submitted.

\noindent Vellaisamy, P. (2015).  On probabilistic proofs of certain binomial identities. {\it Amer. Statist.}, {\bf 69}, 241-243.

\vone
\end{document}